\newtheorem{thm}{Theorem}[section]
\newtheorem{lemma}[thm]{Lemma}
\newtheorem{thmintro}{Theorem}
\newtheorem*{prob}{Problem}
\DeclareMathOperator{\cof}{cof}
\newcommand{\R}{\mathbb R}
\newcommand{\Z}{\mathbb Z}
\newcommand{\T}{\mathbb T}
\DeclareMathOperator{\SL}{SL}
\DeclareMathOperator{\const}{const}
\DeclareMathOperator{\grad}{grad}
\newcommand{\lieder}{\mathcal L}
\title[Toric extremal KRS are K\"ahler-Einstein]{Toric extremal K\"ahler-Ricci solitons are K\"ahler-Einstein}
\author{Simone Calamai \and David Petrecca}
\address{(S.~Calamai) Dip. di Matematica e Informatica ``U. Dini'' - Universit\`a di Firenze \endgraf Viale Morgagni 67A -  Firenze - Italy}
\email{simocala at gmail.com}
\address{(D.~Petrecca) Institut f\"ur Differentialgeometrie - Leibniz Universit\"at Hannover \endgraf Welfengarten 1 -  Hanover - Germany}
\email{petrecca at math.uni-hannover.de}
\subjclass[2010]{53C25 (Primary) --  53C55, 58D19 (Secondary)}
\keywords{extremal K\"ahler metrics, K\"ahler-Ricci solitons, Einstein manifolds, toric manifolds}
\begin{document}
\begin{abstract}
In this short note, we prove that a Calabi extremal K\"ahler-Ricci soliton on a compact toric K\"ahler manifold is Einstein. This solves for the class of toric manifolds a general problem stated by the authors that they solved only under some curvature assumptions.
\end{abstract}
\maketitle

\section*{Introduction}
Let $M^{2n}$ be a compact K\"ahler manifold and let $\Omega \in H^{1,1}(M)$ be a K\"ahler class. In the attempt to identify ``special'' representatives of $\Omega$, several notions of ``canonical'' K\"ahler metrics have been introduced. A natural choice are of course K\"ahler-Einstein metrics, generalized by \emph{extremal} metrics and \emph{K\"ahler-Ricci solitons (KRS)}. Extremal metrics are defined to be critical points of the \emph{Calabi functional}
\[
\omega \mapsto \int_M s_\omega^2 \omega^n
\]
that maps the K\"ahler metric $\omega$ to the $L^2$-norm of its scalar curvature. The Euler-Lagrange equation of the Calabi functional is
\begin{equation} \label{ScalHolo}
\grad_\omega(s_\omega) \text{ is holomorphic}.
\end{equation}

K\"ahler-Ricci solitons are K\"ahler metrics that satisfy the relation
\begin{equation}\label{KRSeq}
\rho +c \omega = \lieder_X \omega
\end{equation}
with their Ricci form $\rho$, for some vector field $X$ that is holomorphic and, in the compact case, is the gradient of a smooth function $f \colon M \to \R$. The KRS equation forces $\omega$ to lie in the class $2 \pi c_1(M)$. 

In \cite{krsextr} we addressed the problem whether the same $\omega \in 2 \pi c_1(M)$ can be extremal and a KRS without being Einstein and we proved the following.

\begin{thmintro}[\cite{krsextr}] \label{thm:krsextr}
	A compact extremal KRS with positive holomorphic sectional curvature is K\"ahler-Einstein.
\end{thmintro}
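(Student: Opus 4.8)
My approach would be to exploit the fact that on a K\"ahler--Ricci soliton lying in $2\pi c_1(M)$ three natural functions are simultaneously \emph{holomorphy potentials} --- the soliton potential, the scalar curvature, and the squared norm of the soliton gradient --- and then to play the resulting linear equations against the Bochner formula for the soliton vector field, reducing everything to a pointwise curvature identity that positive holomorphic sectional curvature rules out. Concretely, normalize \eqref{KRSeq} (after rescaling $\omega$) to $\rho - \omega = i\partial\bar\partial f$, where $f$ is a smooth real function with $X = \grad_\omega f$ and $\grad^{1,0} f$ holomorphic, and trace it to get $s_\omega = n + \Delta f$. Differentiating the soliton equation, contracting it with the metric and using the differential Bianchi identity together with the vanishing of the $(0,2)$-part of the Hessian of $f$ (which is exactly holomorphicity of $\grad^{1,0} f$), one obtains the standard soliton identity $s_\omega + |\nabla f|^2 + f = \const$. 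Now extremality \eqref{ScalHolo} enters: $\grad^{1,0} s_\omega$ is holomorphic, and as $\grad^{1,0} f$ is holomorphic too, the soliton identity forces $\grad^{1,0}\varphi$ to be holomorphic, where $\varphi := |\nabla f|^2 \ge 0$.

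The advantage of having produced this third holomorphy potential is that holomorphy potentials on a K\"ahler--Ricci soliton are very rigid: any such $u$ satisfies the second-order \emph{linear} equation $\Delta u + \langle\nabla f,\nabla u\rangle + u = \const$ (the first-eigenvalue equation of the drift Laplacian $\Delta + \langle\nabla f,\nabla\,\cdot\,\rangle$, which is self-adjoint for the weighted volume $e^{f}\omega^n$). Applying this to $u = \varphi$ and rewriting the drift term via the soliton equation $\rho = \omega + i\partial\bar\partial f$ turns it into $\Delta\varphi = \const - \mathrm{Ric}(V,\overline V)$, where $V := \grad^{1,0} f$. On the other hand, the Bochner formula for the holomorphic vector field $V$ reads $\Delta|V|^2 = |\nabla V|^2 - \mathrm{Ric}(V,\overline V)$; since $|V|^2 = \varphi$, subtracting the two gives $|\nabla V|^2 \equiv \const$, i.e.\ $|\rho - \omega|^2$ is constant on $M$. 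It then remains only to see that this constant is zero, for then $\rho = \omega$ and $\omega$ is K\"ahler--Einstein.

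Showing that the constant vanishes is where positive holomorphic sectional curvature is indispensable, and I expect this to be the main difficulty, because the equation for $\varphi$ carries a zeroth-order term of the unfavourable sign and no bare maximum principle applies. The input I would use is the untraced Bochner identity $\nabla_i\nabla_{\bar j}\varphi = \langle\nabla_i V,\nabla_j V\rangle - R_{i\bar j k\bar l}\,V^k\overline{V^l}$: contracting it with $V^i\overline{V^j}$ at a maximum point of $\varphi = |V|^2$, where the complex Hessian of $\varphi$ is negative semidefinite, yields $|\nabla_V V|^2 \le R(V,\overline V,V,\overline V)$, so the holomorphic sectional curvature in the direction $V$ --- positive by hypothesis --- bounds $|\nabla_V V|^2$ there. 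Combining this with the constancy of $|\nabla V|^2$ from the previous step and integrating against the soliton-adapted measure $e^{f}\omega^n$ should force $\varphi \equiv 0$, whence $f$ is constant and $\rho = \omega$. (If one only reached the weaker conclusion that $\varphi$ is a nonzero constant, that would already be absurd: $V$ would be a nowhere-vanishing holomorphic vector field, while every holomorphic vector field on a manifold with $c_1(M) > 0$ vanishes somewhere.)
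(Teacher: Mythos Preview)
The present paper does \emph{not} prove the statement you are attempting: Theorem~\ref{thm:krsextr} is merely quoted from the authors' earlier work \cite{krsextr}, and nothing further is said about it here. The only proof in this paper is that of Theorem~\ref{thmtoricextremalKRS}, which concerns the \emph{toric} case and proceeds by an entirely different mechanism (Abreu's symplectic coordinates, the behaviour of $G^{-1}$ at the vertices of the Delzant polytope, and the fact that affine functions cannot vanish on all vertices). So there is no ``paper's own proof'' of the positive-holomorphic-sectional-curvature theorem to compare your proposal to; the comparison you were asked for is vacuous in this instance.

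That said, let me comment on your sketch on its own terms. The first half --- deducing from the conserved quantity $s+|\nabla f|^2+2f=\const$ together with extremality that $\varphi=|\nabla f|^2$ is itself a holomorphy potential --- is sound, and indeed is exactly the reduction the paper exploits in the toric proof (there it is the \emph{affineness} of $\varphi$ in symplectic coordinates that is used). Your subsequent manipulation with the drift eigenvalue equation and Bochner to obtain $|\nabla V|^2\equiv\const$ is plausible but would need the intermediate identity $\Delta\varphi=\const-\mathrm{Ric}(V,\overline V)$ to be checked carefully, since the drift term $\langle\nabla f,\nabla\varphi\rangle$ does not obviously collapse to a Ricci term plus constant. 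The real gap, however, is the final paragraph: from the inequality $|\nabla_V V|^2\le R(V,\overline V,V,\overline V)$ at a maximum of $\varphi$ (which is a bound \emph{from above} by a positive quantity, hence by itself contentless) you assert that ``combining this with the constancy of $|\nabla V|^2$ and integrating \dots\ should force $\varphi\equiv 0$'' --- but no mechanism is given, and your parenthetical fallback (ruling out $\varphi$ a nonzero constant) does not apply because you have not shown $\varphi$ is constant, only $|\nabla V|^2$. As written, the argument stops precisely where the curvature hypothesis must do its work.
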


Toric manifolds are compact K\"ahler $2n$-manifolds admitting an effective Hamiltonian action of an $n$-torus $\T$ by K\"ahler automorphism. Although in an algebraic geometric context, Fulton calls them a ``remarkably fertile testing ground for general theories'' and, also from the K\"ahler geometric point of view, their richness of symmetries makes them a large park of examples. 

As compact symplectic manifolds, they are characterized by the image of their moment map, that is a \emph{Delzant polytope}, i.e. a convex polytope $\Delta \subset \R^n$ with certain combinatoric properties. Given a compact symplectic toric manifold with moment image $\Delta$, all possible compatible complex structure are described by a single function, as we explain below.

The $\T$-invariant K\"ahler geometry on a dense subset is well described in the coordinates given by the moment map itself. In these coordinates, the extremal condition \eqref{ScalHolo} has a particularly simple description, see e.g. \cite{abreu}.

Separately, it is known that every toric Fano manifolds admits a KRS, see e.g. \cite{donaldson_toric} and references therein where, in addition, Donaldson explains also the relation between the soliton field $X$ and the Delzant polytope. The existence of extremal metrics in the toric setting is discussed in \cite{abreu}.

The purpose of this note is to prove the following result. 

\begin{thmintro} \label{thmtoricextremalKRS}
	A compact toric Calabi-extremal K\"ahler-Ricci soliton is K\"ahler-Einstein.
\end{thmintro}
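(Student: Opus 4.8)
The plan is to transport everything to the moment polytope and show that extremality leaves no room for a nontrivial soliton field. Since both extremal K\"ahler metrics and K\"ahler--Ricci solitons on a Fano manifold are unique up to biholomorphism, we may assume $\omega$ is invariant under the torus $\T$; then its scalar curvature, its Ricci form $\rho$, and the Ricci potential are all $\T$-invariant. Let $\Delta\subset\R^n$ be the moment polytope and, on the open dense orbit, use the two standard $\T$-invariant descriptions: action--angle coordinates $(x,\theta)$ with a symplectic potential $u$ on $\operatorname{int}\Delta$, and logarithmic holomorphic coordinates $(y,\theta)$ with a K\"ahler potential $\phi$ on $\R^n$, related by $\Hess_x u=(\Hess_y\phi)^{-1}$ and the moment map $x=\grad_y\phi$. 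Two facts from the toric dictionary enter. First, by Abreu's characterization (cf.\ \cite{abreu}), $\omega$ is extremal if and only if its scalar curvature $s_\omega$ is affine in the moment coordinates $x$. Second, the soliton field $X=\grad_\omega f$ is $\T$-invariant (its potential $f$ is, up to an additive constant, the $\T$-invariant Ricci potential), so on the open orbit it is a constant translation $X=\sum_k\xi_k\,\partial_{y_k}$ for some $\xi\in\R^n$, and \eqref{KRSeq} is equivalent to the toric soliton equation (see \cite{donaldson_toric} and references therein)
\[
\log\det\bigl(\Hess_y\phi\bigr)=c\,\phi-\langle\xi,\grad_y\phi\rangle+\ell,\qquad \ell\ \text{affine in}\ y.
\]

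The heart of the proof is a direct computation with this equation. Differentiating it twice in $y$, contracting with $(\Hess_y\phi)^{-1}$, and using the first $y$-derivatives of the equation to simplify the lower-order terms, one expresses $s_\omega$ as an explicit affine function of $x=\grad_y\phi$ minus a positive multiple of $\xi^{\top}(\Hess_y\phi)\,\xi$. Since extremality forces $s_\omega$ itself to be affine in $x$, we obtain an identity of the form
\[
\xi^{\top}\bigl(\Hess_y\phi\bigr)\,\xi \;=\; K+\langle\eta,\,x\rangle \qquad\text{on all of}\ \R^n,
\]
with a constant $K\in\R$ and a vector $\eta\in\R^n$ built from $c$, $\xi$, $\grad\ell$ and the extremal affine function of $\Delta$. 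Now $\xi^{\top}(\Hess_y\phi)\,\xi$ equals, up to a positive constant, the squared norm $|V_\xi|_\omega^2$ of the Killing field $V_\xi=\sum_k\xi_k\,\partial_{\theta_k}$ generated by $\xi$ in the Lie algebra of $\T$; in particular it is strictly positive on the open orbit, whereas the right-hand side is affine in the moment map.

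To conclude, note that $|V_\xi|_\omega^2$ extends to a smooth function on all of $M$ that vanishes exactly on the zero locus of $V_\xi$, in particular at every $\T$-fixed point, and that the $\T$-fixed points of $M$ correspond bijectively to the vertices of $\Delta$. Letting $x$ tend to a vertex $v$ in the displayed identity --- equivalently, invoking the blow-up of $\Hess_y\phi$ forced by the Guillemin boundary behaviour of $u$ near $v$ --- gives $K+\langle\eta,v\rangle=0$ for every vertex $v$ of $\Delta$. Since $\Delta$ is full-dimensional its vertices affinely span $\R^n$, so the affine function $x\mapsto K+\langle\eta,x\rangle$ vanishes identically; the identity then collapses to $\xi^{\top}(\Hess_y\phi)\,\xi\equiv 0$, and positive-definiteness of $\Hess_y\phi$ on the open orbit forces $\xi=0$. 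Hence $X=0$, so \eqref{KRSeq} reads $\rho+c\,\omega=0$ and $\omega$ is K\"ahler--Einstein.

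The hard part is expected to be the input from the toric dictionary rather than the computation: one must justify that the extremal and the soliton vector fields may both be taken $\T$-invariant and identify them on the open orbit, so that Abreu's affineness criterion and the toric form of \eqref{KRSeq} are simultaneously available, and one must quote the precise boundary asymptotics of the symplectic potential at the vertices of $\Delta$ that make $|V_\xi|_\omega^2$ vanish there. Once these standard ingredients are in hand, the computation linking the two equations and the resulting rigidity at the vertices of $\Delta$ are elementary.
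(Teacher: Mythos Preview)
Your argument is correct and its architecture coincides with the paper's: both show that the quantity $\xi^{\top}(\Hess_y\phi)\,\xi=|\nabla f|^2$ is an affine function of the moment coordinates, observe that it vanishes at every vertex of $\Delta$ (since these are the $\T$-fixed points, equivalently since $G^{-1}=\Hess_y\phi$ vanishes there by the Guillemin boundary behaviour), and conclude from the fact that the vertices of a Delzant polytope cannot lie on an affine hyperplane that $\xi=0$.

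The one difference worth noting is how the affineness of $|\nabla f|^2$ is obtained. You differentiate the toric Monge--Amp\`ere soliton equation twice, contract with $(\Hess_y\phi)^{-1}$, and re-substitute the first derivative of the equation; this yields $s_\omega=(\text{affine in }x)-\xi^{\top}(\Hess_y\phi)\,\xi$, whence extremality gives the claim. The paper bypasses this computation entirely by quoting the conserved quantity $s+|\nabla f|^2+2f=\const$ valid on any gradient Ricci soliton: since $s$ is affine by extremality and $f$ is affine because $\grad f$ is holomorphic and $\T$-invariant, the affineness of $|\nabla f|^2$ drops out in one line. Your calculation is, in effect, a toric rederivation of that identity; the paper's route is shorter and has the advantage of making clear that the key ingredient is not special to the toric setting.
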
	

This solves the problem stated in \cite{krsextr} for the class of toric K\"ahler metrics, that can have holomorphic sectional curvature of any sign and so are not included in Theorem \ref{thm:krsextr}. 

The proof of Theorem \ref{thmtoricextremalKRS} is based on the combinatoric properties of Delzant polytopes and the boundary behavior of the Abreu potential. The problem in its full generality remains open.

\begin{prob}Prove that every extremal K\"ahler-Ricci soliton is Einstein or find a counterexample.
\end{prob}

Another class of manifold related to toric K\"ahler manifolds is given by toric bundles, where the existence of KRS has been studied in \cite{PodSpiToric}. It would be interesting to apply the techniques of toric geometry from \cite{abreu, donaldson_toric} to study the existence of extremal or constant scalar curvature K\"ahler metrics in this class of manifolds and establish an analogue of Theorem \ref{thmtoricextremalKRS}.

\subsection*{Acknowledgements}
The first named author is supported by SIR 2014 AnHyC ``Analytic aspects in complex and hypercomplex geometry" (code RBSI14DYEB) and by GNSAGA of INdAM; he also wants to thank Xiuxiong Chen for constant support. The second named author is supported by the  Research Training Group 1463 ``Analysis, Geometry and String Theory'' of the DFG as well as GNSAGA of INdAM. The authors are also grateful to Fabio Podest\`a for his interest in this work and his feedback.

\section{Proof of Theorem \ref{thmtoricextremalKRS}}

Let $(M, g, \omega)$ be a toric K\"ahler manifold, with moment map $\mu \colon M \to \Delta = \mu(M) \subset \R^n$. The moment image can be written as
\begin{equation} \label{delzantpoly}
\Delta = \{ x \in \R^n: \ell_k(x) \geq b_k, 1 \leq k \leq d \}
\end{equation}
as intersection of the $d$ half-spaces $\{ x \in \R^n: \ell_k(x)-b_k \geq 0 \}$.

The linear functions $\ell_k$ are defined by $\ell_k(x) = \langle u_k, x \rangle$, where $v_k$ is the normal to the \emph{facet} $\{ \ell_k(x) = 0 \} \cap \Delta$. The combinatoric property of being Delzant implies the following.

\begin{lemma} \label{lemma:hyperplane}
		Let $\Delta$ be a Delzant polytope in $\R^n$. Then the vertices of $\Delta$ cannot lie on the any affine hyperplane.
\end{lemma}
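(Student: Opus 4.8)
The plan is to deduce the statement from the fact that a Delzant polytope is, by definition, \emph{full-dimensional}: $\Delta \subset \R^n$ is $n$-dimensional, being the moment image of a $2n$-dimensional toric manifold, so its affine hull is all of $\R^n$. Since the affine hull of any convex polytope coincides with the affine hull of its vertex set, the vertices of $\Delta$ already affinely span $\R^n$; as an affine hyperplane is a proper affine subspace of $\R^n$, it cannot contain all of them. This observation is essentially the whole proof, but it seems worth spelling out how the combinatorics at a Delzant vertex makes it concrete and quantitative, which is presumably the shape in which the lemma gets used later on.

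Concretely, I would fix a vertex $v_0$ of $\Delta$ and invoke the Delzant (smoothness) condition: exactly $n$ edges of $\Delta$ meet at $v_0$, and their primitive integral direction vectors $e_1,\dots,e_n$ form a $\Z$-basis of $\Z^n$, hence in particular an $\R$-basis of $\R^n$. Since $\Delta$ is compact, each of these edges is a bounded segment $[v_0, v_0+t_i e_i]$ with $t_i>0$, whose far endpoint $w_i := v_0+t_i e_i$ is again a vertex of $\Delta$. The $n+1$ vertices $v_0, w_1,\dots,w_n$ are then affinely independent, because $w_i-v_0 = t_i e_i$ are linearly independent. An affine hyperplane in $\R^n$ contains at most $n$ affinely independent points, so already these $n+1$ vertices cannot all lie on one, and \emph{a fortiori} neither can the whole vertex set of $\Delta$.

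I do not expect a genuine obstacle here. The two things to be careful about are both built into the notion of a Delzant polytope recalled around \eqref{delzantpoly}: the compactness of $\Delta$, which guarantees that the edges issuing from $v_0$ terminate at other vertices rather than escaping to infinity, and the linear independence of the edge directions at a vertex, which is immediate from the smoothness axiom. If one would rather avoid the vertex picture altogether, the one-line argument of the first paragraph — full-dimensionality of $\Delta$ against the codimension of a hyperplane — already does the job.
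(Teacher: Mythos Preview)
Your proof is correct and, in its concrete second paragraph, matches the paper's argument essentially verbatim: pick a vertex, use the Delzant condition to obtain $n$ edges with $\Z$-basis direction vectors, and observe that the resulting $n+1$ vertices are affinely independent. The abstract full-dimensionality argument you sketch first is also valid and perhaps even cleaner, though the paper does not state it that way.
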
	
\begin{proof}
		Let $P$ be a vertex of $\Delta$. By definition of Delzant polytope, the exactly $n$ edges meeting at $P$ are of the form $t v_i$ for $t \in [0, a_i]$ and the $v_i$ can be taken to be a basis of $\Z^n$. Further $n$ vertices are  of the form $P_i = a_i v_i$ and they cannot lie on the same affine hyperplane of $\R^n$ as the $v_i$ are linearly independent over $\R$. 
\end{proof}	

Given a compact toric symplectic manifold $(M, \omega)$ with Delzant polytope $\Delta$, consider the dense subset
\[
M^0 = \{ p \in M: \text{ the $\T$-action is free at $p$} \} \simeq \Delta^0 \times \T,
\]
where $\Delta^0$ is the interior of $\Delta$ and $(x,y) \in \Delta^0 \times \T$ are the \emph{symplectic coordinates}. In these coordinates, the $\T$-action is just the group multiplication on the second component. In particular, $\T$-invariant tensor fields on $M^0$ depend only on $x \in \Delta^0$.

All $\T$-invariant complex structures compatible with $\omega$ are determined by the \emph{Abreu potential}, a function $g\colon \Delta^0 \to \R$ given by 
	\begin{equation} \label{AbreuPot}
	2g(x) = \sum \ell_k(x) \log \ell_k(x) + h(x),
	\end{equation}
on the interior of $\Delta$, where the $\ell_k$ are from \eqref{delzantpoly} and $h$ is a smooth function on $\Delta$.

In the $(x,y)$-coordinates, the symplectic form is the canonical $\omega = dx_i \wedge dy_i$ and the K\"ahler metric corresponding $g$ as in \eqref{AbreuPot} it is $ g_{,ij}(x) dx_i \cdot dx_j$, where $G = (g_{,ij})$ is the (Euclidean) Hessian of $g$. The matrix $G$ has to be singular on the boundary of $\Delta$ in order for the metric to extend smoothly on the whole $M$. However, it is possible to describe the behavior of $G$ on the vertices of $\Delta$.
	
	\begin{lemma} \label{lemma:vertices}
		The inverse of the Hessian matrix $G$ vanishes at the vertices of $\Delta$.
	\end{lemma}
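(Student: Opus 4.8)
The plan is to write the Hessian $G$ explicitly from the Abreu potential \eqref{AbreuPot}, split off the part that becomes singular at a vertex, and then invert by a short perturbation argument. Here and below, $\ell_k$ denotes (as is implicit in \eqref{AbreuPot}) the affine function vanishing on the $k$-th facet and positive on $\Delta^0$, with linear part $u_k$. Differentiating twice one gets $\partial_i\partial_j\big(\ell_k\log\ell_k\big)=(u_k)_i(u_k)_j/\ell_k(x)$, hence
\[
2G(x)\;=\;\sum_{k=1}^{d}\frac{u_k\otimes u_k}{\ell_k(x)}\;+\;H(x),
\]
where $H=(h_{,ij})$ is the Hessian of the smooth function $h$ and is therefore bounded near any point of $\overline\Delta$. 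The whole point is that this sum has a controllable singularity as $x$ tends to a vertex.

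Fix a vertex $P$. By the Delzant condition exactly $n$ facets meet at $P$; relabel so that these are $k=1,\dots,n$, and recall that their normals $u_1,\dots,u_n$ form a $\Z$-basis, in particular an $\R$-basis, of $\R^n$. For $k>n$ we have $\ell_k(P)>0$, so the terms $u_k\otimes u_k/\ell_k(x)$ stay bounded in a neighbourhood of $P$; collecting them together with $H$ into a single symmetric matrix $B(x)$ that is bounded near $P$, we obtain
\[
2G(x)\;=\;\sum_{j=1}^{n}\frac{u_j\otimes u_j}{\ell_j(x)}\;+\;B(x)\qquad\text{for }x\text{ near }P.
\]
Now pass to the affine coordinates $t_j:=\ell_j(x)$; these are honest global coordinates on $\R^n$ precisely because $u_1,\dots,u_n$ are linearly independent (this is exactly the input from the Delzant property used in Lemma \ref{lemma:hyperplane}), $P$ becomes the origin, and locally $\Delta$ is the positive orthant. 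In these coordinates the singular part is the diagonal matrix $D(t)=\mathrm{diag}(1/t_1,\dots,1/t_n)$, so $2\widetilde G(t)=D(t)+\widetilde B(t)$ with $\widetilde B$ bounded near $0$. Since $G^{-1}$ transforms by a congruence with a constant invertible matrix, it suffices to prove $\widetilde G(t)^{-1}\to 0$ as $t\to 0$ inside the orthant.

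This last step is the perturbation estimate. Write $D+\widetilde B=\big(I+D^{-1}\widetilde B\big)D$. Since $\|D^{-1}\|=\max_j t_j\to 0$ while $\|\widetilde B\|$ stays bounded, we have $\|D^{-1}\widetilde B\|\le\tfrac12$ once $t$ is small enough, so $(I+D^{-1}\widetilde B)^{-1}$ is bounded by a Neumann series and
\[
\big\|(D+\widetilde B)^{-1}\big\|\;\le\;\big\|(I+D^{-1}\widetilde B)^{-1}\big\|\,\big\|D^{-1}\big\|\;\longrightarrow\;0.
\]
Hence $\widetilde G^{-1}$, and therefore $G^{-1}$, extends continuously across $P$ with value $0$, which is the assertion of the lemma.

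The only point that needs genuine care is the bookkeeping around the coordinate change: one must invoke the Delzant condition to know that $x\mapsto(\ell_1(x),\dots,\ell_n(x))$ is an affine isomorphism near $P$, and one must note that ``$G^{-1}$ vanishes at $P$'' is a coordinate-free statement, so nothing is lost by computing in the adapted coordinates. Beyond that the argument is elementary; there is no analytic obstacle once the singular term $\sum_{j\le n}u_j\otimes u_j/\ell_j$ has been isolated and recognised as diagonal in suitable coordinates.
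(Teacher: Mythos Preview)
Your argument is correct and reaches the same conclusion as the paper, with the same preliminary reduction (use the Delzant condition at a vertex to make the $n$ vanishing affine functions into coordinates, so the singular part of $G$ becomes the diagonal $\mathrm{diag}(1/t_1,\dots,1/t_n)$). The difference is in the final inversion step. The paper invokes Abreu's formula \cite[Thm.~2.8]{abreu} for $\det G$, namely $(\det G)^{-1}=\delta(x)\,x_1\cdots x_n\,\ell_{n+1}\cdots\ell_d$ with $\delta$ smooth and positive on $\Delta$, and then argues by inspection that each cofactor of $G$ is $o\big(1/(x_1\cdots x_n)\big)$, so every entry of $G^{-1}$ tends to $0$. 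Your Neumann-series perturbation of $D$ by the bounded remainder $\widetilde B$ is more self-contained: it needs no external input about $\det G$ and gives the quantitative bound $\|G^{-1}\|=O(\max_j t_j)$ directly. The paper's route, on the other hand, ties the result to a structural fact about the Abreu potential that is useful elsewhere.

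One small slip to fix: the factorisation should read $D+\widetilde B = D\big(I+D^{-1}\widetilde B\big)$, not $\big(I+D^{-1}\widetilde B\big)D$; as written the product equals $D+D^{-1}\widetilde B D$. Your subsequent norm estimate is the one corresponding to the correct order, so the argument is unaffected.
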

	
	\begin{proof}
		Without loss of generality, up to translations and to a transformation of $\SL(n,\Z)$, we can assume that $0$ is a vertex and that the edges meeting there are the coordinate axes $x_1, \ldots, x_n$.
		
		The transformed polytope is then given by
		\[
		\Delta = \bigcap_{i=1}^n \{ x \in \R^n: x_i  \geq 0 \} \cap \bigcap_{i=n+1}^d \{ x \in \R^n: \ell_k(x) \geq 0 \}
		\]
		and the linear functions $\ell_k$ do not vanish at zero.
		
	The Abreu potential $g$ is given by
	\begin{equation*}
	2g(x) = \sum_{i=1}^n x_i \log x_i + \underbrace{\sum_{i=n+1}^d	\ell_i(x) \log \ell_i(x) + h(x)}_{=: \tilde h(x)}
	\end{equation*}
		and its Hessian matrix is 
		\begin{equation} \label{eq:G}
		G_{ij} = \frac{\delta_{ij}}{x_j} + \tilde h_{,ij}(x)
		\end{equation}
		where the function $\tilde h_{,ij}$ is given by
		\begin{equation} \label{eq:derivh}
		\tilde h_{,ij} = \sum_{k=n+1}^d \frac{\ell_{k,i}(x) \ell_{k,j}(x)}{\ell_k(x)} + h_{,ij}.
		\end{equation}
		
		From \cite[Thm.~2.8]{abreu},  the determinant of $G$ is given by 
		\begin{equation*}
		\frac 1 {\det G} = \delta(x) x_1 \cdots x_n \cdot \ell_{n+1}(x) \cdots \ell_d(x)
		\end{equation*}
		for some function $\delta$ \emph{strictly positive and smooth on the whole $\Delta$}.
		
		The entry $g^{ij}$ of $G^{-1}$ is given by
		\[
		g^{ij}(x) = \frac 1 {\det G} \cof(G)_{ij}
		\]
		where $\cof(G)$ is the cofactor matrix of $G$.
		
		The conclusion follows from the claim that
		\begin{equation*}
		\cof(G)_{ij} = o \biggl ( \frac 1 {x_1 \cdots x_n} \biggr ).
		\end{equation*}
		
		From \eqref{eq:G} one can see that, after eliminating the $i$-th row and the $j$-th column, the variables $x_i$ and $x_j$ can appear at the denominator only in the derivatives of $\tilde h$, but from \eqref{eq:derivh} we see that their limit for $x \to 0$ is finite, so the claim is true.
	\end{proof}

Abreu's characterization \cite{abreu} of toric extremal metrics relies on the fact that a $\T$-invariant function has a holomorphic gradient if, and only if, it is an affine function in the symplectic coordinates. We use this on the scalar curvature and on the potential $f$ of $X$.

\begin{proof}[Proof of Theorem \ref{thmtoricextremalKRS}]
	From the preserved quantity (in our notation)
	\[
	s + |\nabla f|^2 + 2f = \const
	\]
	that holds for every Ricci soliton, see e.g. \cite{chow}, plus the extremal assumption, it follows that both $f$ and $ |\nabla f|^2$ are affine functions in the interior of $\Delta$.
	
	If $f = a \cdot x$, then one has that
	\[
	 |\nabla f|^2 = a^T G^{-1}(x)a
	\]
	is an affine function  as well. If we consider its extension to the whole $\R^n$, it is zero in all the vertices of $\Delta$ by the Lemma \ref{lemma:vertices}. On the other hand, the zeros of a nonzero affine function is a proper affine hyperplane, so by Lemma \ref{lemma:hyperplane} we can conclude that the length of $X$ must be the zero function. So $X=0$ and the metric is Einstein.
\end{proof}

\bibliography{../../allbib/allbib}
\bibliographystyle{amsplain}

\end{document}